\numberwithin{equation}{section}
\newtheorem{theorem}{Theorem}[section]
\newtheorem{lemma}[theorem]{Lemma}
\begin{document}
\author{Alexander E Patkowski}
\title{Partitions related to positive definite binary quadratic forms}

\maketitle
\begin{abstract} The purpose of this paper is to present a collection of interesting generating functions for partitions which have connections to positive definite binary quadratic forms. In establishing our results we obtain some new Bailey pairs.  \end{abstract}

\keywords{\it Keywords: \rm Bailey pairs; Partitions; $q$-series.}

\subjclass{ \it 2010 Mathematics Subject Classification 11P84, 11P81}

\section{Introduction and main theorems}

In a study on lacunary partition functions [7], Lovejoy offered a collection of interesting partition functions which satisfy an estimate given by P. Bernays. Therein, Theorem 1 is constructed using a special Bailey pair which connects $q$-series to positive definite binary quadratic forms. There appears to be few studies in the literature developing connections between positive definite quadratic forms and partitions through these types of Bailey pairs. By positive definite, we take the usual definition where we write $Q(x,y)=ax^2+bxy+cy^2,$ with $a>0,$ $b^2-4ac<0,$ for $a,b,c\in\mathbb{Z}.$ We offer a collection of partitions that we believe to be new and interesting, and follow from new Bailey pairs that are similar to the one offered in [7].
\par
We will be applying $q$-series notation that is widely used throughout the literature [6]. We shall put $(z;q)_n=(z)_n:=(1-z)(1-zq)\cdots(1-zq^{n-1}).$ It is taken that $q\in\mathbb{C},$ and all of our series converge in the unit circle, $0<|q|<1.$
\par In our first example, we consider a partition function that is related to the $f_1(q)$ studied in [4], but with a different weight function.
\begin{theorem}\label{thm:thm1} Let $P_{m,j}(n)$ denote the number of partitions of $n$ into $m$ distinct parts and one part $2m+1$ that may repeat any number of times or not appear at all. Here $j$ is the largest distinct part. Then,
$$\sum_{m,n,j\ge0}(-1)^jP_{m,j}(n)q^n= \frac{1}{2}\sum_{n\ge0}q^{n^2+n/2}(1+q^{n+1/2})\sum_{|j|\le n}q^{j^2/2}$$
$$+\frac{1}{2}\sum_{n\ge0}(-1)^nq^{n^2+n/2}(1-q^{n+1/2})\sum_{|j|\le n}(-1)^jq^{j^2/2}.$$
\end{theorem}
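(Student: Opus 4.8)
\emph{Proof strategy.} The plan is to reduce the left-hand side to a single $q$-hypergeometric sum, and then to produce the theta-type right-hand side from a Bailey pair, in the spirit of [7].

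First I would compute the generating function of $P_{m,j}(n)$ in closed form. For fixed $m\ge1$ and fixed largest distinct part $j$, a partition counted by $P_{m,j}(n)$ consists of the part $j$, a choice of $m-1$ further distinct parts from $\{1,\dots,j-1\}$, and an arbitrary number of copies of $2m+1$. Since the generating function for $m-1$ distinct parts drawn from $\{1,\dots,j-1\}$ is $q^{\binom{m}{2}}\binom{j-1}{m-1}_q$ and the repeated part contributes $1/(1-q^{2m+1})$, summing $(-1)^jq^{j}$ times this over $j$ (writing $j=m+l$) and using $\sum_{l\ge0}\binom{m+l-1}{l}_q(-q)^{l}=1/(-q;q)_m$ gives, after $m+\binom{m}{2}=\binom{m+1}{2}$,
\[
\sum_{m,n,j\ge0}(-1)^jP_{m,j}(n)q^n=\sum_{m\ge0}\frac{(-1)^m q^{\binom{m+1}{2}}}{(1-q^{2m+1})(-q;q)_m}.
\]
The $m=0$ term reproduces $1/(1-q)$, so the formula holds for all $m\ge0$. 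This step is routine.

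The substantive step is to realize this single sum through a Bailey pair. Writing $q=Q^2$, the summand carries $Q^{m^2+m}=Q^{m^2}\cdot Q^{m}$, so I would work in base $Q$ relative to $a=Q$, seeking a pair $(\alpha_m,\beta_m)$ with $\beta_m$ proportional to $1/\big((1-Q^{4m+2})(-Q^2;Q^2)_m\big)$, so that $\sum_m a^mQ^{m^2}\beta_m$ is exactly the sum above. I would then extract $\alpha_m$ from the Bailey inversion; the factor $1-Q^{4m+2}=(1-aQ^{2m})(1+aQ^{2m})$ is precisely the kind of $(1-aq^{2m})$ term that the inversion produces, and I expect $\alpha_m$ to be supported on a single quadratic form, which is the new Bailey pair advertised in the abstract. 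Applying the limiting case of Bailey's lemma converts the sum into $\sum_m a^mQ^{m^2}\alpha_m$ (up to the usual product factor), which should collapse to a double Gaussian sum of the shape $\sum Q^{2n^2+n+j^2}$.

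Finally I would match this double sum to the stated right-hand side. Separating contributions by the parity of $n$ shows that the terms split into those with $n\equiv j$ and those with $n\not\equiv j\pmod 2$: the former give $q^{n^2+n/2}\sum_{|j|\le n}q^{j^2/2}$-type pieces and the latter the $q^{n+1/2}$-shifted pieces, which is exactly the symmetric/antisymmetric combination recorded in the theorem once one averages the base-$Q$ identity over $Q\mapsto\pm Q$ (this is what turns $\sum_{|j|\le n}q^{j^2/2}$ and $\sum_{|j|\le n}(-1)^jq^{j^2/2}$ into the two summands). A useful consistency check is that $q^{n^2+n/2+j^2/2}$ is an integral power precisely when $n\equiv j\pmod2$, while $q^{n^2+3n/2+1/2+j^2/2}$ is integral precisely when $n\not\equiv j$, matching the two halves. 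The main obstacle is the middle step: fixing the correct normalization and value of $a$, writing down and verifying the new Bailey pair, and controlling the partial (false) theta sums $\sum_{|j|\le n}$ that here replace a full theta function — this is where the real work, and the novelty, lie.
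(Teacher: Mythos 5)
Your reduction of the left-hand side is correct and matches the paper exactly: the paper's proof also rests on the identity $\sum_{m,n,j\ge0}(-1)^jP_{m,j}(n)q^n=\sum_{m\ge0}\frac{(-1)^mq^{m(m+1)/2}}{(-q;q)_m(1-q^{2m+1})}$, which is the left side of its equation (2.11), and your $q$-binomial argument for it is fine. The gap is that the entire analytic content of the theorem --- the passage from this single sum to the two partial theta-type sums $\sum_{|j|\le n}(\pm1)^jq^{j^2/2}$ --- is deferred to a Bailey pair you never write down, and the route you sketch for finding it is not the one that works. You propose to set $q=Q^2$ and look for a pair in base $Q$ relative to $a=Q$ with $\beta_m\propto 1/\bigl((1-Q^{4m+2})(-Q^2;Q^2)_m\bigr)$; such a $\beta$ mixes base-$Q$ and base-$Q^2$ Pochhammer symbols in a way the Bailey inversion does not naturally produce, and your ``limiting case of Bailey's lemma'' giving $\sum_m a^mQ^{m^2}\beta_m$ corresponds to sending both specializations to infinity, which loses the factor $1/(-q;q)_m$ that your left-hand side requires (one specialization must be kept finite, namely $X_1=q$, so that $(q;q)_m/(q^2;q^2)_m=1/(-q;q)_m$ appears). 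Moreover your expectation that $\alpha_m$ is ``supported on a single quadratic form'' is precisely what fails: the $\alpha_n$ responsible for the truncated sums $\sum_{|j|\le n}$ necessarily carries an inner sum over $j$.

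For comparison, the paper's actual mechanism stays in base $q$: it takes Slater's E(1) pair relative to $a=1$ ($\alpha_0=1$, $\alpha_n=2(-1)^nq^{n^2}$ for $n>0$, $\beta_n=1/(q^2;q^2)_n$) and feeds it into Lovejoy's Bailey-lattice lemma (Lemma 2.1) with the free parameter $b=q^{1/2}$, symmetrized over $b\mapsto-b$ (Lemma 2.2) so that the new $\beta_n=\beta_n(1,q)/(1-b^2q^{2n})=1/\bigl((q^2;q^2)_n(1-q^{2n+1})\bigr)$ involves only $b^2=q$, while the new $\alpha_n$ retains the genuine half-integral powers and the inner sums $\sum_{j}(\pm1)^jq^{j^2/2}$; applying (2.2) with $X_1=q$ and $X_2\to\infty$ then yields (2.11). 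Your structural intuitions (the averaging over $Q\mapsto-Q$ producing the symmetric and antisymmetric halves, the parity bookkeeping, the target form $Q^{2n^2+n+j^2}$) are all consistent with this, but without exhibiting and verifying the pair the argument is incomplete: as written, the middle step amounts to asserting that a suitable Bailey pair exists.
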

We recall that an overpartition is a partition of $n$ where the first occurrence of a number may be overlined [9].
\begin{theorem}\label{thm:thm2} Let $Q_{m,j}(n)$ denote the number of partition pairs $(\mu,\lambda)$ of $n$ where $\mu$ is the number of overpartitions into even parts $\le2m$ with $j$ equal to the number of parts, and $\lambda$ is the number of partitions into odd parts $\le2m+1$ where (i) all odd numbers $< 2m+1$ appear as a part and an even number of times. (ii) the part $2m+1$ may repeat any number of times or not appear at all.
$$\sum_{m,n,j\ge0}(-1)^jQ_{m,j}(n)q^n= \frac{1}{2}\sum_{n\ge0}q^{3n^2/2}(1+q^{2n+1})(1+q^{n+1/2})\sum_{|j|\le n}q^{j^2/2}$$
$$+\frac{1}{2}\sum_{n\ge0}(-1)^nq^{3n^2/2}(1+q^{2n+1})(1-q^{n+1/2})\sum_{|j|\le n}(-1)^jq^{j^2/2}.$$
\end{theorem}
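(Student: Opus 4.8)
The plan is to reduce the combinatorial left-hand side to an explicit $q$-series, recognize that series as one side of a Bailey-pair identity, and then transport it to the theta-type right-hand side by means of a new Bailey pair modeled on the one used in [7].

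First I would translate the definition of $Q_{m,j}(n)$ into generating functions for each fixed $m$. The overpartition $\mu$ into even parts $\le 2m$, with its number of parts marked by a variable $z$, contributes $\prod_{k=1}^m \frac{1+zq^{2k}}{1-zq^{2k}}$; setting $z=-1$ to account for the weight $(-1)^j$ collapses this to $\frac{(q^2;q^2)_m}{(-q^2;q^2)_m}$. The partition $\lambda$ into odd parts $\le 2m+1$, in which every odd number below $2m+1$ occurs a positive even number of times while $2m+1$ is unrestricted, contributes $\frac{q^{2m^2}}{(1-q^{2m+1})(q^2;q^4)_m}$. Multiplying, summing over $m$, and using the elementary factorizations $(-q^2;q^2)_m=(q^4;q^4)_m/(q^2;q^2)_m$ and $(q^2;q^4)_m=(q^2;q^2)_{2m}/(q^4;q^4)_m$, I would obtain the closed form
\[
\sum_{m,n,j\ge0}(-1)^jQ_{m,j}(n)q^n=\sum_{m\ge0}\frac{(q^2;q^2)_m^2}{(q^2;q^2)_{2m}}\cdot\frac{q^{2m^2}}{1-q^{2m+1}}.
\]
This bookkeeping is routine but must be done with care, since the precise reading of condition (i) (each odd part below $2m+1$ appears, and does so an even number of times) is exactly what produces the base-$q^2$ Pochhammer symbols.

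Next I would exhibit the Bailey pair behind this series. The summand $\frac{(q^2;q^2)_m^2}{(q^2;q^2)_{2m}(1-q^{2m+1})}$ is, up to the quadratic weight $q^{2m^2}$, the $\beta_m$ of a Bailey pair relative to $a=q$, whose companion $\alpha_m$ is the finite Gauss sum $\sum_{|j|\le m}q^{j^2/2}$ dressed by the prefactor $q^{3m^2/2}(1+q^{2m+1})(1\pm q^{m+1/2})$ (the half-integer powers indicating that one should work in the variable $q^{1/2}$). Crucially, the factor $1+q^{2m+1}=1+aq^{2m}$ (with $a=q$) is precisely the kind of symmetric factor by which one may multiply the $\alpha$ of an existing Bailey pair and re-derive a new $\beta$; applying this operation to the pair of [7] yields the new Bailey pair advertised in the abstract, and identifying its $\beta_m$ with the product $\frac{(q^2;q^2)_m^2}{(q^2;q^2)_{2m}(1-q^{2m+1})}$ is the step I expect to be the main obstacle. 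Verifying the pair amounts to a finite $q$-series identity confirming that $\sum_{r}\alpha_r/\big((q;q)_{m-r}(aq;q)_{m+r}\big)$ returns this $\beta_m$, which reduces to a finite theta/Gauss-sum evaluation via the $q$-binomial theorem.

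Finally I would insert this Bailey pair into Bailey's transform with the quadratic weight governing the $q^{2m^2}$ term, chosen so that both sides remain honest $q$-series and no spurious infinite products are introduced; this converts $\sum_m(\text{weight})\beta_m$, namely the left-hand side, into $\sum_m(\text{weight})\alpha_m$. Splitting the Gauss sum $\sum_{|j|\le m}q^{j^2/2}$ according to the parity of $j$ relative to $m$ — equivalently, forming the combinations $\tfrac12(1+q^{m+1/2})\sum_{|j|\le m}q^{j^2/2}$ and $\tfrac12(-1)^m(1-q^{m+1/2})\sum_{|j|\le m}(-1)^jq^{j^2/2}$ — then produces exactly the two sums on the right-hand side, whose exponents (such as $3m^2+j^2$) are the values of the positive definite forms in question. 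As a running check against sign and parity errors in the combinatorial step, I would confirm the identity to several orders in $q$; both sides begin $1+q+2q^2+q^3+\cdots$, with all half-integer powers cancelling.
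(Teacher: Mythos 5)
Your combinatorial reduction is correct and agrees with the paper's: for fixed $m$ the weighted generating function is $\frac{(q^2;q^2)_m}{(-q^2;q^2)_m}\cdot\frac{q^{2m^2}}{(q^2;q^4)_m(1-q^{2m+1})}$, your Pochhammer manipulations correctly collapse this to $\frac{(q^2;q^2)_m^2\,q^{2m^2}}{(q^2;q^2)_{2m}(1-q^{2m+1})}$, and your series check $1+q+2q^2+q^3+0\cdot q^4+2q^5+\cdots$ is consistent with both sides. The difficulty is that the entire analytic content of the theorem --- the existence of a Bailey pair, relative to $(a,q)=(q^2,q^2)$ rather than $a=q$ as you write, whose $\beta_m$ is $\frac{(-1)^mq^{m^2-m}(q^2;q^2)_m}{(q^2;q^2)_{2m}(1-q^{2m+1})}$ and whose $\alpha_m$ is the pair of dressed Gauss sums --- is precisely the step you flag as ``the main obstacle'' and then do not carry out. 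The claim that verifying $\sum_{r}\alpha_r/\bigl((q^2;q^2)_{m-r}(q^4;q^2)_{m+r}\bigr)=\beta_m$ ``reduces to a finite theta/Gauss-sum evaluation via the $q$-binomial theorem'' is unsubstantiated; there is no reason to expect a short $q$-binomial argument to produce a $\beta$ carrying the factor $(1-q^{2m+1})^{-1}$ together with an $\alpha$ built from partial theta sums of both signs. As it stands, the proposal proves the easy half (the combinatorial dictionary) and defers the hard half.

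For comparison, the paper obtains this pair (Lemma 2.4) by an explicit construction rather than a direct verification: it starts from Slater's E(1) pair $\alpha_n(1,q)=2(-1)^nq^{n^2}$ for $n>0$, $\beta_n(1,q)=1/(q^2;q^2)_n$; passes it through Lovejoy's Bailey-lattice lemma with $a=1$, symmetrized over $b\mapsto-b$ (Lemma 2.2), which is where the factor $(1-b^2q^{2n})^{-1}$ --- hence $(1-q^{2m+1})^{-1}$ at $b=q^{1/2}$ --- and the two $\pm$ combinations of Gauss sums enter; and then applies the change-of-base lemma of [10] (Lemma 2.3), which doubles the base and produces the $(q^2;q^2)_{2m}$ denominator and the factor $1+q^{2m+1}=1+aq^{2m}$ you correctly noticed, with an identity from Fine's book needed to put $\alpha$ in closed form. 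Your final step of ``splitting the Gauss sum according to the parity of $j$'' is also backwards: the two sums on the right-hand side are not produced by a post hoc splitting but are already present in $\alpha_m$ because of the $b\mapsto\pm b$ symmetrization. To complete your argument you would need either to reproduce a construction of this kind or to actually supply and prove the finite identity you allude to; neither is done in the proposal.
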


\begin{theorem}\label{thm:thm3} Let $R_{m,j}(n)$ denote the number of partition pairs $(\mu,\pi)$ of $n$ where $\mu$ is the number of overpartitions into even parts $\le2m$ with $j$ equal to the number of parts, and $\lambda$ is the number of partitions into odd parts $\le2m+1$ where (i) all odd numbers $< 2m+1$ appear as a part and at least once. (ii) the part $2m+1$ may repeat any number of times or not appear at all. Then,
$$\sum_{m,j,n\ge0}(-1)^{j}R_{m,j}(n)q^n= \frac{1}{2}\sum_{n\ge0}q^{n^2/2}(1+q^{2n+1})(1+q^{n+1/2})\sum_{|j|\le n}q^{j^2/2}$$
$$+\frac{1}{2}\sum_{n\ge0}(-1)^nq^{n^2/2}(1+q^{2n+1})(1-q^{n+1/2})\sum_{|j|\le n}(-1)^jq^{j^2/2}.$$
\end{theorem}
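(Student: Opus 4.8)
The plan is to turn the combinatorial left-hand side into a single basic hypergeometric series, recognise that series as the $\beta$-side of a new Bailey pair, and then insert the pair into Bailey's lemma to generate the double partial theta function on the right.

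First I would compute the two generating functions attached to a pair $(\mu,\lambda)$. Recording the number of parts of the overpartition $\mu$ by a variable $z$, the overpartitions into even parts $\le 2m$ have generating function $\prod_{k=1}^m (1+zq^{2k})/(1-zq^{2k})$; the weight $(-1)^j$ corresponds to the specialisation $z=-1$, which collapses this to $(q^2;q^2)_m/(-q^2;q^2)_m$. The partitions $\lambda$ into odd parts $\le 2m+1$ in which each odd number below $2m+1$ occurs at least once, while $2m+1$ occurs freely, have generating function $q^{m^2}/(q;q^2)_{m+1}$, where the factor $q^{m^2}$ records the forced minimum $1+3+\cdots+(2m-1)=m^2$. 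Multiplying these and summing over $m$ gives
\[
\sum_{m,j,n\ge0}(-1)^jR_{m,j}(n)q^n=\sum_{m\ge0}\frac{(q^2;q^2)_m}{(-q^2;q^2)_m}\,\frac{q^{m^2}}{(q;q^2)_{m+1}}.
\]

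Next I would set $\beta_m=(q^2;q^2)_m/\big((-q^2;q^2)_m(q;q^2)_{m+1}\big)$, so that the series is $\sum_m q^{m^2}\beta_m$, and determine the partner $\alpha_m$ for which $(\alpha_m,\beta_m)$ is a Bailey pair relative to $a=1$, i.e.\ $\beta_m=\sum_{r=0}^m \alpha_r/\big((q)_{m-r}(q)_{m+r}\big)$. This is the new Bailey pair promised in the abstract, and, following the model of Lovejoy's pair in [7], I expect $\alpha_m$ to be a signed partial Gauss sum assembled from $\sum_{|j|\le m}q^{j^2/2}$ and $\sum_{|j|\le m}(-1)^jq^{j^2/2}$ together with the factor $1+q^{2m+1}$ contributed by the even overpartition parts; the half-integer exponents signal that the pair is most cleanly obtained by applying the base change $q\mapsto q^{1/2}$ to a simpler seed. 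I would either verify the defining relation directly or recover $\alpha_m$ from $\beta_m$ by Bailey inversion.

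Feeding this pair into the relevant instance of Bailey's lemma then rewrites $\sum_m q^{m^2}\beta_m$ as the partial theta sum built from $\sum_m q^{m^2}\alpha_m$, and substituting the Gauss-sum form of $\alpha_m$ yields a double partial theta function in which the inner sum is truncated at $|j|\le m$. Splitting this according to the parity of the indices recombines the two sign variants of the inner Gauss sum with the weights $\tfrac12(1\pm q^{m+1/2})$ and the sign $(-1)^m$, producing precisely the two displayed series. The computation is parallel to that of Theorem~\ref{thm:thm2}; the only change is the exponent of $q^{m^2}$ in $\beta_m$ forced by condition (i) (``at least once'' here versus ``an even number of times'' there), and this is exactly what turns the outer weight $q^{3n^2/2}$ into $q^{n^2/2}$.

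The step I expect to be the main obstacle is the construction and verification of the new Bailey pair—producing the explicit partial-Gauss-sum $\alpha_m$ that carries the binary quadratic form and checking the Bailey relation after the $q\mapsto q^{1/2}$ base change. Once $\alpha_m$ is in hand the remainder is routine: the passage through Bailey's lemma and the parity dissection that assembles the factors $1\pm q^{m+1/2}$ and the truncated Gauss sums $\sum_{|j|\le m}(\pm1)^jq^{j^2/2}$ are of the same type already used for Theorems~\ref{thm:thm1} and~\ref{thm:thm2}.
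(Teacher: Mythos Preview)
Your combinatorial reduction is correct and coincides with the paper's: the weighted generating function on the left is exactly
\[
\sum_{m\ge0}\frac{(q^2;q^2)_m\,q^{m^2}}{(-q^2;q^2)_m\,(q;q^2)_{m+1}},
\]
which is the paper's identity (2.13). The overall strategy---produce a Bailey pair and push it through (2.2)---is also the paper's. The divergence is in how the pair is set up, and that divergence is not cosmetic.

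You propose to take $\beta_m=(q^2;q^2)_m\big/\bigl((-q^2;q^2)_m(q;q^2)_{m+1}\bigr)$ as a pair relative to $a=1$ in base $q$ and then invoke the instance of (2.2) that outputs $\sum_m q^{m^2}\beta_m$. That instance is $X_1,X_2\to\infty$, and for $a=1$ it carries the prefactor $1/(q)_\infty$ on the $\alpha$-side. Since the right-hand side of the theorem contains no infinite product, the $\alpha_m$ you would recover by inversion cannot be the clean object you anticipate (a partial Gauss sum times $(1+q^{2m+1})$); the missing $(q)_\infty$ has to be hidden somewhere in it. The paper avoids this by \emph{not} loading $(q^2;q^2)_m/(-q^2;q^2)_m$ into $\beta$. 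It works in base $q^2$ relative to $a=q^2$ with the simpler $\beta$ of Lemma~2.4, built from Slater's $E(1)$ pair by Lovejoy's lattice step (Lemmas~2.1--2.2) followed by the $q\mapsto q^2$ base change (Lemma~2.3); the half-integral exponents come from the specialisation $b=q^{1/2}$, not from a $q\mapsto q^{1/2}$ base change. The extra Pochhammers are then recovered by the \emph{finite} choice $X_1=q^2$, $X_2=-q$ in (2.2), for which the infinite-product prefactor collapses to $(1-q^2)/(1+q)=1-q$. In short, the ``main obstacle'' you flag is precisely what Lemmas~2.1--2.4 do constructively; your inversion plan would work in principle but would not yield the transparent $\alpha$-side that makes the parity splitting on the right immediate.
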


\section{Proof of Theorems}
Here we give the proofs of our theorems, which will require some lemmas from the literature and also some new Bailey pairs. First we note that a pair $(\alpha_n(a; q),\beta_n(a,q))$ is said to be a Bailey pair [2, 12] with respect to
$(a,q)$ if
\begin{equation}\beta_n(a,q)=\sum_{0\le j\le n}\frac{\alpha_j(a,q)}{(q;q)_{n-j}(aq;q)_{n+j}}.\end{equation} It is known that [12]
\begin{equation}\sum_{n\ge0}(X_1)_n(X_2)_n(aq/X_1X_2)^n\beta_n(a,q)=\frac{(aq/X_1)_{\infty}(aq/X_2)_{\infty}}{(aq)_{\infty}(aq/X_1X_2)_{\infty}}\sum_{n\ge0}\frac{(X_1)_n(X_2)_n(aq/X_1X_2)^n\alpha_n(a,q)}{(aq/X_1)_n(aq/X_2)_n}.\end{equation}
We need to write down a result that was established by Lovejoy [8,eq.(2.4)--(2.5)]
\begin{lemma} (Lovejoy [8, eq.(2.4)--(2.5)]) If $(\alpha_n(a; q),\beta_n(a,q))$ is a Bailey pair, then so is $(\alpha^{*}_n(aq,b, q),\beta^{*}_n(aq,b,q))$ where
\begin{equation}\alpha^{*}_n(aq,b, q)=\frac{(1-aq^{2n+1})(aq/b;q)_n(-b)^nq^{n(n-1)/2}}{(1-aq)(bq)_n}\sum_{n\ge j\ge0}\frac{(b)_j}{(aq/b)_j}(-b)^{-j}q^{-j(j-1)/2}\alpha_j(a,q)\end{equation}
\begin{equation}\beta^{*}_n(aq,b,q)=\frac{(1-b)}{1-bq^n}\beta_n(a,q).\end{equation}
\end{lemma}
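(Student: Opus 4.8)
The plan is to verify directly that the starred pair satisfies the defining relation (2.1), now taken with respect to $(aq,q)$; that is, to establish the identity $\beta^{*}_n(aq,b,q)=\sum_{0\le j\le n}\alpha^{*}_j(aq,b,q)/\left((q;q)_{n-j}(aq^2;q)_{n+j}\right)$, where the shift of base replaces $(aq)_{n+j}$ by $(aq^2)_{n+j}$. Since $\beta^{*}_n(aq,b,q)=\frac{1-b}{1-bq^n}\beta_n(a,q)$ by (2.5), and $\beta_n(a,q)=\sum_{0\le k\le n}\alpha_k(a,q)/\left((q)_{n-k}(aq)_{n+k}\right)$ by the hypothesis that $(\alpha_n,\beta_n)$ is a Bailey pair, it suffices to match the coefficient of each $\alpha_k(a,q)$ on the two sides.

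First I would substitute the explicit formula (2.3) for $\alpha^{*}_j$ into the right-hand sum and interchange the order of summation, rewriting the double sum over $0\le k\le j\le n$ as an outer sum over $k$ times an inner sum over $k\le j\le n$. This isolates, as the coefficient of $\alpha_k(a,q)$, the product of the outer factor $\frac{(b)_k}{(aq/b)_k}(-b)^{-k}q^{-k(k-1)/2}$ with the inner sum $\sum_{k\le j\le n}\frac{(1-aq^{2j+1})(aq/b)_j(-b)^jq^{j(j-1)/2}}{(1-aq)(bq)_j(q)_{n-j}(aq^2)_{n+j}}$.

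The heart of the matter is evaluating this inner sum in closed form. I would rewrite $\frac{1}{(q)_{n-j}}$ and $\frac{1}{(aq^2)_{n+j}}$ via the standard reversals $\frac{1}{(q)_{n-j}}=\frac{(q^{-n})_j}{(q)_n}(-1)^jq^{nj-j(j-1)/2}$ and $(aq^2)_{n+j}=(aq^2)_n(aq^{n+2})_j$; the first cancels the Gaussian factor $q^{j(j-1)/2}$ exactly, leaving a very-well-poised summand in $A=aq$ with numerator parameters $aq/b$ and $q^{-n}$ and argument $bq^n$. Shifting the index by $k$ then turns the partial sum from $j=k$ to $n$ into a full terminating very-well-poised series in the new index, with base parameter $aq^{2k+1}$, one numerator parameter equal to $q$ (which supplies the otherwise absent $(aq^{2k+1})_i/(q)_i$), and terminating parameter $q^{-(n-k)}$. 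This is summable in closed form by the $q$-analogue of Dougall's theorem, the very-well-poised ${}_{6}\phi_{5}$ summation [6]. I expect the main obstacle to be this parameter identification together with the Gaussian-factor bookkeeping, since confirming that no residual $q^{j(j-1)/2}$ survives is exactly what licenses the classical summation to apply verbatim.

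Finally I would simplify the resulting closed form and check that, after multiplication by the outer factor, the coefficient of $\alpha_k(a,q)$ collapses to precisely $\frac{1-b}{1-bq^n}\frac{1}{(q)_{n-k}(aq)_{n+k}}$; summing over $k$ and invoking the Bailey relation for $\beta_n(a,q)$ then reproduces $\frac{1-b}{1-bq^n}\beta_n(a,q)=\beta^{*}_n(aq,b,q)$, completing the verification. As a consistency check I would first test the endpoint $k=n$, where the inner sum reduces to the single term $j=n$ and the identities $\frac{(b)_n}{(bq)_n}=\frac{1-b}{1-bq^n}$ and $(aq)_{2n+1}=(1-aq)(aq^2)_{2n}$ already yield the predicted value, and also the case $n=1$, which confirms the closed form in the smallest nontrivial instance.
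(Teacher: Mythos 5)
Your proposal is correct, but note that the paper itself offers no proof of this lemma: it is quoted verbatim from Lovejoy [8, eqs.\ (2.4)--(2.5)], so there is no internal argument to compare against. What you supply is a self-contained direct verification, and it is the standard way results of this type are established. Your key step checks out: after substituting (2.3) into $\sum_{0\le j\le n}\alpha^{*}_j(aq,b,q)/\bigl((q)_{n-j}(aq^2)_{n+j}\bigr)$ and interchanging sums, the reversal $1/(q)_{n-j}=(q^{-n})_j(-1)^jq^{nj-j(j-1)/2}/(q)_n$ does cancel the Gaussian factor exactly, leaving argument $(bq^n)^j$; shifting $j=k+i$ produces a terminating very-well-poised series with base $aq^{2k+1}$, numerator parameters $q$ and $aq^{k+1}/b$, and terminating parameter $q^{-(n-k)}$, which the $q$-Dougall ${}_6\phi_5$ summation evaluates to $\frac{(aq^{2k+2})_{n-k}(bq^k)_{n-k}}{(aq^{2k+1})_{n-k}(bq^{k+1})_{n-k}}=\frac{(1-aq^{n+k+1})(1-bq^k)}{(1-aq^{2k+1})(1-bq^n)}$. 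Multiplying by the outer factor, the telescopings $(b)_k(1-bq^k)/(bq)_k=1-b$ and $(1-aq)(aq^2)_{n+k}/(1-aq^{n+k+1})=(aq)_{n+k}$ give precisely $\frac{1-b}{1-bq^n}\cdot\frac{1}{(q)_{n-k}(aq)_{n+k}}$ as the coefficient of $\alpha_k(a,q)$, so the identity follows from the Bailey relation for $\beta_n(a,q)$; your endpoint checks at $k=n$ and $n=1$ are also consistent. The only caveat is that, as written, the heart of the argument (the parameter identification in the ${}_6\phi_5$) is described rather than carried out, so for a complete write-up you should display the shifted sum and the summation theorem explicitly; Lovejoy's own derivation in [8] reaches the same pair by composing previously known Bailey-pair constructions rather than by a single ${}_6\phi_5$ evaluation, but both routes are legitimate and yours has the advantage of being verifiable from first principles with one classical summation.
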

If we let $a=1$ in Lemma 2.1, divide both sides by $(1-b),$ and add the resulting Bailey pair to itself after replacing $b$ by $-b$ we obtain the next Bailey pair.

\begin{lemma} If $(\alpha_n(1, q),\beta_n(1,q))$ is a Bailey pair, then so is $(L_{1(n)}(q,b, q),L_{2(n)}(q,b,q))$ where
\begin{equation}L_{1(n)}(q,b,q)=\frac{1}{2(1-b)}\alpha^{*}_n(q,b, q) +\frac{1}{2(1+b)}\alpha^{*}_n(q,-b, q),\end{equation}
\begin{equation}L_{2(n)}(q,b,q)=\frac{\beta_n(1,q)}{1-b^2q^{2n}}.\end{equation}
\end{lemma}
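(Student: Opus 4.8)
The plan is to verify Lemma 2.2 by direct computation, following exactly the recipe described in the sentence preceding the statement. We are told to take the Bailey pair $(\alpha^{*}_n(aq,b,q),\beta^{*}_n(aq,b,q))$ from Lemma 2.1, set $a=1$, divide by $(1-b)$, and then average this with its image under $b\mapsto -b$. The key structural fact I would invoke is that the set of Bailey pairs relative to a fixed $(a,q)$ is closed under $\mathbb{C}$-linear combinations: if $(\alpha,\beta)$ and $(\alpha',\beta')$ both satisfy the defining relation $(2.1)$ with the same base, then so does any linear combination $(\lambda\alpha+\mu\alpha',\lambda\beta+\mu\beta')$, since $(2.1)$ is linear in $(\alpha,\beta)$. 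Thus the only real work is to track the specialization and the averaging through both components.

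First I would set $a=1$ in $(2.3)$ and $(2.4)$, so that the resulting pair is relative to the base $(aq,q)=(q,q)$. After dividing both components by $(1-b)$, the beta side reads $\tfrac{1}{1-bq^n}\beta_n(1,q)$, and the alpha side becomes $\tfrac{1}{1-b}\alpha^{*}_n(q,b,q)$ in the notation of the statement. Next I would form the $b\mapsto -b$ copy, which by the closure property above is again a Bailey pair relative to $(q,q)$, with beta component $\tfrac{1}{1+bq^n}\beta_n(1,q)$ and alpha component $\tfrac{1}{1+b}\alpha^{*}_n(q,-b,q)$. Adding the two pairs, the new alpha is precisely $L_{1(n)}(q,b,q)$ as written in $(2.5)$, so that component requires no further manipulation.

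The one genuine computation is the beta side. Adding the two beta components gives
$$
\left(\frac{1}{1-bq^n}+\frac{1}{1+bq^n}\right)\beta_n(1,q)
=\frac{(1+bq^n)+(1-bq^n)}{(1-bq^n)(1+bq^n)}\beta_n(1,q)
=\frac{2}{1-b^2q^{2n}}\beta_n(1,q).
$$
At this point the result carries an extra factor of $2$ compared with $(2.6)$; the natural reading is that the averaging is normalized by $\tfrac12$ (an ``average'' rather than a raw ``sum''), which absorbs the $2$ and simultaneously halves the alpha side to match the explicit $\tfrac12$-factors already present in $(2.5)$. I would therefore state the combination as $\tfrac12$ times the sum of the two pairs, making both $(2.5)$ and $(2.6)$ come out exactly as written.

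The step I expect to be the main obstacle is not the algebra but keeping the normalization and the role of the argument $b$ coherent: one must ensure that the substitution $b\mapsto -b$ is applied to \emph{every} occurrence of $b$ in $(2.3)$ and $(2.4)$ (including the hidden dependence inside $\alpha^{*}$ through the factors $(aq/b)_n$, $(bq)_n$, $(b)_j$ and $(-b)^{n-j}$), and that the $\tfrac12$ normalization is tracked consistently across both components. Once the averaging convention is fixed so that $(2.6)$ holds, the closure of Bailey pairs under linear combination immediately certifies that $(L_{1(n)},L_{2(n)})$ is itself a Bailey pair relative to $(q,q)$, completing the proof.
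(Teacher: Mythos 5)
Your proof is correct and follows exactly the route the paper intends (the paper's own ``proof'' is just the one-sentence recipe preceding the statement): specialize Lemma 2.1 at $a=1$ so the pair is relative to $(q,q)$, invoke linearity of the defining relation $(2.1)$ to combine the $b$ and $-b$ instances, and compute the $\beta$-side by partial fractions. You are also right that the combination must be the average (half the sum) rather than the literal sum for both $(2.5)$ and $(2.6)$ to come out as written --- a normalization the paper's wording glosses over but which is harmless since Bailey pairs with a fixed base are closed under scalar multiples.
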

We need a result that was written down in Patkowski [10].
\begin{lemma} If $(\alpha(a,q),\beta(a,q))$
forms a Bailey pair with respect to $(a, q),$ then $(\alpha'_n(a^2,q^2),\beta'_n(a^2,q^2))$ forms a Bailey pair with respect to $(a^2,q^2)$, if
\begin{equation}\alpha'_n(a^2,q^2)=\frac{(1+aq^{2n})}{(1+a)q^n}\alpha_n(a,q),\end{equation}
\begin{equation}\beta'_n(a^2,q^2)=\frac{q^{-n}}{(-a;q)_{2n}}\sum_{n\ge j \ge0}\frac{(-1)^{n-j}q^{(n-j)^2-(n-j)}}{(q^2;q^2)_{n-j}}\beta_j(a,q).\end{equation}
\end{lemma}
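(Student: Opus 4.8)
The plan is to verify Lemma 2.3 directly by substituting the proposed pair $(\alpha'_n,\beta'_n)$ into the Bailey-pair defining relation (2.1) with respect to $(a^2,q^2)$, and reducing it to the assumed relation (2.1) for $(\alpha_n,\beta_n)$ with respect to $(a,q)$. Concretely, I would start from the hypothesis
\begin{equation*}
\beta_n(a,q)=\sum_{0\le j\le n}\frac{\alpha_j(a,q)}{(q;q)_{n-j}(aq;q)_{n+j}},
\end{equation*}
and aim to show that the claimed $\beta'_n(a^2,q^2)$ equals $\sum_{0\le k\le n}\alpha'_k(a^2,q^2)/\bigl((q^2;q^2)_{n-k}(a^2q^2;q^2)_{n+k}\bigr)$.

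First I would insert the stated formula $\beta'_n(a^2,q^2)=q^{-n}(-a;q)_{2n}^{-1}\sum_{n\ge j\ge0}(-1)^{n-j}q^{(n-j)^2-(n-j)}(q^2;q^2)_{n-j}^{-1}\beta_j(a,q)$ and then replace each $\beta_j(a,q)$ by its expansion in terms of $\alpha_i(a,q)$ via (2.1). This produces a double sum over the two summation indices, and the key step is to interchange the order of summation so that the inner sum becomes a sum over the variable that produced $\beta_j$, leaving an explicit kernel multiplying each $\alpha_i(a,q)$. To match the target expression I then need to use the relation (2.7) connecting $\alpha'_k(a^2,q^2)$ to $\alpha_k(a,q)$, namely $\alpha'_k(a^2,q^2)=(1+aq^{2k})(1+a)^{-1}q^{-k}\alpha_k(a,q)$, so that the proof reduces to a purely $q$-hypergeometric identity in the factorial quotients, with the $\alpha_i(a,q)$ factored out.

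The heart of the argument is therefore an identity among products of $q$-Pochhammer symbols: after the interchange, the coefficient of $\alpha_i(a,q)$ on the $\beta'$ side must be shown to equal the coefficient $(1+aq^{2i})(1+a)^{-1}q^{-i}\bigl((q^2;q^2)_{n-i}(a^2q^2;q^2)_{n+i}\bigr)^{-1}$ arising on the right side of (2.1) for the primed pair. I would collapse the inner finite sum using a terminating $q$-series evaluation (essentially a $q$-analogue of the Chu–Vandermonde or Gauss-type summation specialized by the Gaussian factor $q^{(n-j)^2-(n-j)}$), together with the standard splitting $(x;q)_{2m}=(x;q^2)_m(xq;q^2)_m$ to separate the base-$q$ symbols into base-$q^2$ symbols. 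The bookkeeping with the $(-a;q)_{2n}$ prefactor and the half-integer-type Gaussian weight is what makes this delicate.

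The main obstacle I anticipate is precisely this inner-sum evaluation: the Gaussian factor $q^{(n-j)^2-(n-j)}$ and the alternating sign $(-1)^{n-j}$ must combine with the $q$-factorials to telescope or summate in closed form, and getting the powers of $q$ and the $(1+a)$, $(1+aq^{2n})$ factors to align exactly with (2.7) requires careful tracking of exponents. Since the lemma is attributed to Patkowski [10], I would not reprove it from scratch but rather indicate that it follows by this substitution-and-summation argument, pointing to [10] for the full computation; the real content here is simply recording the transformation for use in the theorems, so I expect the proof in the paper to be brief and to cite the reference.
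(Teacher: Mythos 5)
Your proposal is consistent with the paper, which gives no proof of this lemma at all and simply quotes it from [10]; deferring to that reference, as you do at the end, is exactly what the author does. Your sketched direct verification is also sound: after substituting (2.1) and interchanging sums, the inner terminating sum does evaluate in closed form, namely $\sum_{j=i}^{n}\frac{(-1)^{n-j}q^{(n-j)^2-(n-j)}}{(q^2;q^2)_{n-j}(q;q)_{j-i}(aq;q)_{j+i}}=\frac{(1+aq^{2i})\,q^{n-i}\,(-a;q)_{2n}}{(1+a)\,(q^2;q^2)_{n-i}\,(a^2q^2;q^2)_{n+i}}$, which is precisely what is needed to match the coefficient of $\alpha_i(a,q)$ coming from (2.7), so nothing in your outline would fail.
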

We take the E(1) Bailey pair relative to $a=1$ from Slater's list [11], $\alpha_{0}(1,q)=1,$
\begin{equation}\alpha_n(1,q)=2(-1)^nq^{n^2},\end{equation} when $n>0,$ and
\begin{equation}\beta_n(1,q)=\frac{1}{(q^2;q^2)_n},\end{equation}
and insert it into Lemma 2.2. We then insert the resulting pair into Lemma 2.3 to obtain our main Bailey pair. To obtain the following lemma, we require use of an identity found in Fine's text [pg.26, eq.(20.41): $a=b/q,$ $c=0.$]
$$(t)_{\infty}\sum_{n\ge0}\frac{t^n}{(q)_n(1-bq^{n})}=\sum_{n\ge0}\frac{(-t)^nb^nq^{n(n-1)/2}}{(b)_{n+1}}.$$

\begin{lemma} Define
$$U_{n}(q,b, q):=\frac{(1-q^{4n+2})(q/b;q)_n(-b)^nq^{n(n-1)/2}}{q^n(1-b)(1-q^2)(bq)_n}\left(1+2\sum_{n\ge j>0}\frac{(b)_j}{(q/b)_j}(b)^{-j}q^{j(j+1)/2}\right).$$
Then, $(U_{1(n)}(q^2,b,q^2), U_{2(n)}(q^2,b,q^2))$ form a Bailey pair where $U_{1(n)}(q^2,b,q^2)=\frac{1}{2}U_{n}(q,b, q)+\frac{1}{2}U_{n}(q,-b, q),$ and
$$U_{2(n)}(q^2,b,q^2)=\frac{(-b^2)^nq^{n(n-2)}}{(-q)_{2n}(b^2;q^2)_{n+1}}.$$
\end{lemma}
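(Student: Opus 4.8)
The plan is to build the pair $(U_{1(n)},U_{2(n)})$ by threading the E(1) Bailey pair through Lemma 2.2 and then Lemma 2.3, treating the $\alpha$-side and the $\beta$-side separately. The composition raises the base in two stages: Lemma 2.2 applied to $(\alpha_n(1,q),\beta_n(1,q))$ yields a pair relative to $(q,q)$, and feeding this into Lemma 2.3 with $a=q$ produces a pair relative to $(q^2,q^2)$. Because both intermediate lemmas carry Bailey pairs to Bailey pairs, the claim that $(U_{1(n)},U_{2(n)})$ is a Bailey pair is automatic; the content of the lemma is purely the two explicit evaluations. I would first record the transformed quantities in closed form, namely
$$U_{1(n)}(q^2,b,q^2)=\frac{1+q^{2n+1}}{(1+q)q^n}\,L_{1(n)}(q,b,q)$$
and
$$U_{2(n)}(q^2,b,q^2)=\frac{q^{-n}}{(-q;q)_{2n}}\sum_{n\ge j\ge 0}\frac{(-1)^{n-j}q^{(n-j)^2-(n-j)}}{(q^2;q^2)_{n-j}}\,L_{2(j)}(q,b,q),$$
so that the lemma reduces to two explicit computations.

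For the $\alpha$-side I would substitute the E(1) values $\alpha_0(1,q)=1$ and $\alpha_j(1,q)=2(-1)^jq^{j^2}$ for $j>0$ into the defining sum of $\alpha^{*}_n(q,b,q)$ from Lemma 2.1 with $a=1$, isolating the $j=0$ term. The factor $(-b)^{-j}(-1)^j$ collapses to $b^{-j}$ and $q^{-j(j-1)/2}q^{j^2}$ collapses to $q^{j(j+1)/2}$, so the inner sum becomes exactly the parenthetical factor $1+2\sum_{n\ge j>0}(b)_j(q/b)_j^{-1}b^{-j}q^{j(j+1)/2}$ appearing in $U_n(q,b,q)$. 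Collecting the prefactors $\tfrac{1+q^{2n+1}}{(1+q)q^n}$ and $\tfrac{1-q^{2n+1}}{1-q}$ through $(1+q^{2n+1})(1-q^{2n+1})=1-q^{4n+2}$ and $(1+q)(1-q)=1-q^2$ identifies $\tfrac{1+q^{2n+1}}{(1+q)q^n}\cdot\tfrac{1}{1-b}\alpha^{*}_n(q,b,q)$ with $U_n(q,b,q)$. Since $L_{1(n)}$ is the symmetric combination in $\pm b$, this gives $U_{1(n)}(q^2,b,q^2)=\tfrac12 U_n(q,b,q)+\tfrac12 U_n(q,-b,q)$; this step is essentially bookkeeping.

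The substantive step is the $\beta$-side. Here $L_{2(j)}(q,b,q)=\beta_j(1,q)/(1-b^2q^{2j})=\bigl[(q^2;q^2)_j(1-b^2q^{2j})\bigr]^{-1}$, so after the reindexing $k=n-j$ the sum defining $U_{2(n)}$ is the coefficient of $t^n$ in the Cauchy product
$$\left(\sum_{k\ge0}\frac{(-1)^kq^{k^2-k}}{(q^2;q^2)_k}t^k\right)\left(\sum_{m\ge0}\frac{t^m}{(q^2;q^2)_m(1-b^2q^{2m})}\right).$$
By Euler's identity the first factor is $(t;q^2)_{\infty}$, so this product is precisely the left-hand side of the stated Fine identity taken in base $q^2$ with $b$ replaced by $b^2$. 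Reading off the coefficient of $t^n$ on its right-hand side yields the value $(-1)^nb^{2n}q^{n^2-n}/(b^2;q^2)_{n+1}$, and multiplying by $q^{-n}/(-q;q)_{2n}$ produces exactly $U_{2(n)}(q^2,b,q^2)=(-b^2)^nq^{n(n-2)}/\bigl[(-q)_{2n}(b^2;q^2)_{n+1}\bigr]$. I expect the only genuine obstacle to be recognizing the convolution as a Cauchy product and matching the parameters so that Fine's identity applies; once the base-$q^2$, $b\mapsto b^2$ specialization is spotted, the evaluation is immediate.
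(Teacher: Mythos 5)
Your proposal is correct and follows exactly the route the paper indicates: thread the E(1) pair through Lemma 2.2 and then Lemma 2.3 with $a=q$, and evaluate the resulting $\beta$-side convolution via Euler's theorem and Fine's identity in base $q^2$ with $b\mapsto b^2$. The paper only sketches this; your write-up supplies the same computation in full detail, and all the parameter matches and exponent bookkeeping check out.
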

\begin{proof}[Proof of Theorem~\ref{thm:thm1}] We take take the Bailey pair that results from inserting (2.9)--(2.10) into Lemma 2.2 with $b=q^{1/2}.$ Then we apply (2.2) with $X_1=q$ and let $X_2\rightarrow\infty$ to obtain
\begin{equation}\sum_{n\ge0}\frac{(-1)^nq^{n(n+1)/2}}{(-q)_n(1-q^{2n+1})}= \frac{1}{2}\sum_{n\ge0}q^{n^2+n/2}(1+q^{n+1/2})\sum_{|j|\le n}q^{j^2/2}\end{equation}
$$+\frac{1}{2}\sum_{n\ge0}(-1)^nq^{n^2+n/2}(1-q^{n+1/2})\sum_{|j|\le n}(-1)^jq^{j^2/2}.$$
It is clear to see that $(-1)^mq^{m(m+1)/2}/(-q)_m$ generates partitions of $n$ into $m$ distinct parts weighted by $-1$ raised to the largest part. The component $(1-q^{2m+1})^{-1}$ generates a partition of the part $2m+1$ which may repeat or not appear at all. Combining the two gives the partition described in the theorem.
\end{proof}

\begin{proof}[Proof of Theorem~\ref{thm:thm2}] We take the Bailey pair in Lemma 2.4 with $b=q^{1/2}$ and then insert it into (2.2) with $X_1=q^2,$ $X_2\rightarrow\infty,$ to get 
\begin{equation}\sum_{n\ge0}\frac{(q^2;q^2)_nq^{2n^2}}{(-q^2;q^2)_n(q^2;q^4)_n(1-q^{2n+1})}= \frac{1}{2}\sum_{n\ge0}q^{3n^2/2}(1+q^{2n+1})(1+q^{n+1/2})\sum_{|j|\le n}q^{j^2/2}\end{equation}
$$+\frac{1}{2}\sum_{n\ge0}(-1)^nq^{3n^2/2}(1+q^{2n+1})(1-q^{n+1/2})\sum_{|j|\le n}(-1)^jq^{j^2/2}.$$
Now $(q^2;q^2)_m/(-q^2;q^2)_m$ generates an overpartition of $n$ into even parts $\le2m$ weighted by $-1$ raised to the number of parts. The function $q^{m^2}/(q^2;q^4)_m(1-q^{2m+1})$ generates the partition $\lambda$ given in the theorem. To see this, we write
$$\frac{q^{2n^2}}{(q^2;q^4)_n(1-q^{2n+1})}=\frac{q^{1+1+3+3+\cdots (2n-1)+(2n-1)}}{(1-q^{1+1})(1-q^{3+3})\cdots(1-q^{2n-1+2n-1})(1-q^{2n+1})}. $$

\end{proof}

\begin{proof}[Proof of Theorem~\ref{thm:thm3}] We take the Bailey pair in Lemma 2.4 with $b=q^{1/2}$ and then insert it into (2.2) with $X_1=q^2,$ $X_2=-q,$ and rewrite to get 
\begin{equation}\sum_{n\ge0}\frac{(q^2;q^2)_nq^{n^2}}{(-q^2;q^2)_n(q;q^2)_{n+1}}= \frac{1}{2}\sum_{n\ge0}q^{n^2/2}(1+q^{2n+1})(1+q^{n+1/2})\sum_{|j|\le n}q^{j^2/2}\end{equation}
$$+\frac{1}{2}\sum_{n\ge0}(-1)^nq^{n^2/2}(1+q^{2n+1})(1-q^{n+1/2})\sum_{|j|\le n}(-1)^jq^{j^2/2}.$$
The partition generating function on the left side is quite similar to our previous theorem, and so the remaining details are left to the reader.

\end{proof}
\rm
\par
Here we can observe that more partition functions may be obtained by instead selecting different Bailey pairs from Slater's list [11] in conjunction with Lemma 2.1. This principal idea is of course aided with the application of the work [3] to obtain simple forms of $\beta_n(a,q)$ in the same way we have done here with Lemma 2.3.

1390 Bumps River Rd. \\*
Centerville, MA
02632 \\*
USA \\*
E-mail: alexpatk@hotmail.com
\end{document}